\theoremstyle{plain}
\newtheorem{lemma}{Lemma} 
\newtheorem{prop}{Proposition} 
\newtheorem{thm}{Theorem}
\newtheorem{exam}{Example}
\newtheorem{cor}{Corollary}
\newtheorem{defn}{Definition}
\author {P. G. Romeo and Arjun. S. N}
\address{Dept. of Mathematics, Cochin University of Science and Technology, 
	Kochi, Kerala, INDIA.}
\email{$romeo_-parackal@yahoo.com,\, arjunsnmaths1996@gmail.com $}
\subjclass{20M10}
\keywords {Lie algebra, Plesken Lie algebra, Group algebra, Category, Functor.}
\thanks{} 
\date{}
\begin{document}
	\title{On category of Lie algebras}
	\maketitle
	
	\begin{abstract}
		In this paper we describe the the category of  Lie algebras of group algebras and the category of Plesken Lie algebras and explore the catgorical relations between them.   Further we provide the examples of the Lie algebra of the group algebra of subgroups of Heisenburg group and the Plesken Lie algebra of subgroups of Heisenburg group.
		
	\end{abstract}
	\maketitle

	\section{INTRODUCTION}
	The concept of Lie algebras was introduced by Sophus Lie to solve problems in Lie groups with some ease.  Here we introduce some class of Lie algebras like Lie algebras of group algebras and Plesken Lie algebras of  groups. 	In recent times category theory established itself as a practical tool in dealing with mathematical structures. In particular a categorical approach enables to extract more insight into the interactions between Lie groups and Lie algebras. Here we discuss the category of some class of Lie algebras, functorial relation that exists  between them with some examples.
	\hspace{1.5in}
	\section{PRELIMINARIES} 	 
	In the following we briefly recall all basic definitions and 
	the elementary concepts needed in the sequel. In particular we recall the 
	definitions of  Lie algebra, group algebra, Plesken Lie algebra,  categories, functors and  discusses some interesting properties of these structures. 
	
	\begin{defn}(cf.\cite{kss}) A category ${\mathcal C} $ consists of the following data$:$ 
		
		\begin{enumerate}
			\item A class called the class of vertices or objects $\nu {\mathcal C}. $ 
			\item A class of disjoint sets ${\mathcal C}(a,b)$ one for each pair $ (a,b) 
			\in 
			\nu {\mathcal C}\times \nu {\mathcal C}.$  An element $f \in{\mathcal C}$ is 
			called a morphism from $ a $ to $ b, $ written $ f : a \rightarrow b$  ; $ a = 
			dom\; f$ called the domain of $f$  and $ b = cod\; f $  called the codomain of 
			$ 
			f . $  
			\item For $ a, b, c, \in \nu{\mathcal C}, $ a map
			\newline
			$ \circ: {\mathcal C} ( a,b) \times {\mathcal C}( b , c)  \rightarrow  
			{\mathcal 
				C}( a , c) \hspace{40 pt} ( f , g )  \rightarrow g \circ f $
			\newline
			$\circ $  is called the $ composition $ of morphisms in $ {\mathcal C} . $
			\item  for each $ a \in \nu {\mathcal C} $, a  unique $  1_a \in {\mathcal C}( 
			a,a ) $ is  called the identity morphism on $a.$
		\end{enumerate}
		
		These must satisfy the following axioms :
		\begin{itemize}
			\item(cat1)   The composition is associative : for $ f \in  {\mathcal C} ( a,b) 
			, g \in  {\mathcal C}( b , c)\; and\; h \in   {\mathcal C}( c , d ), $ we have 
			\newline
			$$ f \circ ( g \circ h) = (f \circ g) \circ h $$
			\item (cat 2) for each $ a \in \nu {\mathcal C}, f \in {\mathcal C}\; (a,b) \; 
			and\; g \in {\mathcal C} (c , a),$
			\\ \begin{center}  $ 1_a \circ f = f \qquad   and \qquad g \circ 1_a = g  
				$\end{center}.
		\end{itemize}
	\end{defn}
	
	Clearly $\nu \mathcal C$ can be identify as a subclass of $\mathcal C$ and with 
	this identification
	it is possible to regard categories in terms of morphisms alone. The category 
	$\mathcal C$ is said to
	be small if the class $\mathcal C$ is a set. A morphism $f\in \mathcal C(a, b)$ 
	is said to be an
	isomorphism if there exists $f^{-1} \in \mathcal C(b, a)$ such that $ff^{-1} = 
	1_a = e_a,$ domain identity
	and $f^{-1}f = 1_b= f_b,$ range identity. 
	
	\begin{exam}
		A group $G$ can be regarded as a category $ \mathcal{C}$ with  the object set of $\mathcal{C}$ say $\nu C={G}$, and morphisms  $\mathcal C(G, G) = G$ and composition in $\mathcal C$ is the binary 
		operation 
		in $G.$ Identity element in the group will be the identity morphism on  the 
		vertex $G.$
	\end{exam}
	\begin{defn}\cite{maclane}
		For categories $ \mathcal{C}$ and $\mathcal{D}$ a functor $T :\mathcal{C} \rightarrow \mathcal{D}$ with domain $\mathcal{C}$ and codomain $\mathcal{D}$ consists of two functions: the object function $T$, which assigns to each object $c$ of $\mathcal{C}$ an object $Tc$ of $\mathcal{D}$ and the arrow function which assigns to each arrow $f : c \rightarrow c'$  of $\mathcal{C}$ an arrow $Tf : Tc \rightarrow Tc'$ of $\mathcal{D}$, in such a way that $T(1_{c}) = 1_{Tc}$ and $T(g \circ f) = Tg \circ Tf$. 
	\end{defn}
	\begin{defn}\cite{Humphreys}
		A vector space $L$ over a field $\mathbb{F}$, with an operation $L \times L \rightarrow L$, denoted by $(x, y) \mapsto [x,y]$ for $x$ and $y$ in $L$  and satisfying  the following axioms: 
		\begin{enumerate}
			\item The bracket operation is bilinear.  For $x,y,z \in L, a,b \in \mathbb{F}$
			\begin{center}
				$[ax+by, z]= a[x, z]+b[y, z]$\\
				$[x, ay+bz] = a[x,y]+b[x,z]$
			\end{center}
			\item $[x,x] =0$ for all $x \in L$
			\item Jacobi identitiy :\\$[x,[y,z]]+[y,[z,x]]+[z,[x,y]]=0$ for all $x,y,z \in L$     
		\end{enumerate} 
		is called a bracket product and $(L, [.,.])$ is called a Lie algebra over $\mathbb{F}$. 
	\end{defn}
	\begin{exam}
		$End(V)$, the set of all linear transformations on a finite 
		dimensional vector space V over a field $\mathbb{F}$  is a Lie algebra with Lie bracket $[x, y] = xy - yx$ for $x, y \in End(V)$.
	\end{exam}
	A subspace $K$ of a Lie algebra $L$ is called a \textit{Lie subalgebra} if $[x, y] \in K$ whenever $x, y \in K$. 
	\begin{defn}
		Let $G$ be a group and let $\mathbb{F}$ be $\mathbb R$ or $\mathbb C$. Define a vector space over $\mathbb{F}$ with elements of $G$ as a basis, and denote it by $\mathbb{F}G$.That is; $\mathbb{F}G=\{ \displaystyle\sum_{i} a_ig_i: a_i \in \mathbb{F} \ \text{for all i} \}$.The addition and scalar multiplication in $\mathbb{F}G$ are defined by; for
		\begin{center}
			$\alpha=\displaystyle\sum_{i=0} a_ig_i  \ \text{and} \  \beta=\displaystyle\sum_{i=0} b_ig_i $
		\end{center}
		in $\mathbb{F}G$ and $k \in \mathbb{F}$, 
		\begin{center}
			${\alpha}+\beta=\displaystyle\sum_{i}(a_i+b_i)g_i$ \text{and}  
			$ k \alpha=\displaystyle\sum_{i}(k a_i)g_i$
		\end{center}
		The vector space $\mathbb{F}G$, with multiplication defined by 
		\begin{center}
			$\begin{pmatrix} \displaystyle\sum_{i} a_ig_i \end{pmatrix}
			\begin{pmatrix} \displaystyle\sum_{j}b_jg_j \end{pmatrix}=\displaystyle\sum_{i,j }a_i b_j(g_ig_j) , \ \indent a_i,b_j \in \mathbb{F} $
		\end{center} 
		is called the group algebra of $G$ over $\mathbb{F}$.
	\end{defn}
	The group algebra of a finite group $G$ is a vector space of dimension $|G|$ which also
	carries extra structure involving the the product operation on $G$.
	\begin{exam}
		$G=C_3= \ <a:a^3=e>$ where e is the identity in $G$. Then 
		\begin{center}
			$\mathbb{C}G=\{ \lambda_1e+\lambda_2a+\lambda_3a^2 : \lambda_i \in \mathbb C \  \text{for} \  i=1,2,3 \}$
		\end{center} 
		$\mathbb{C}G$ is a group algebra with usual addition and multiplication of series.
	\end{exam}
	
	\section{Lie algebras of group algebras and Plesken Lie algebras}
	\indent Let $\mathbb{F}G$ be a group algebra over $\mathbb{F}$. Then $\mathbb{F}G$ can be regarded as a Lie algebra by defining the Lie bracket $[\ , \ ] : \mathbb{F}G \times \mathbb{F}G \rightarrow \mathbb{F}G$ as follows : 
	\begin{center}
		$[ \alpha  ,  \beta   ] = \alpha\beta  - \beta \alpha$ for $\alpha=\displaystyle\sum_{i} a_ig_i  \ \text{and} \  \beta=\displaystyle\sum_{i} b_ig_i $ in $\mathbb{F}G$
	\end{center} 
	Clearly $\mathbb{F}G$ is a Lie algebra with respect to the given Lie bracket and is
	called the Lie algebra of the group algebra $\mathbb{F}G$ and we denote it by $L_{\mathbb{F}G}$.\\
	A linear map between $L_{\mathbb{F}G}$ and $L_{\mathbb{F}H}$ which preserves the Lie bracket  is a \textit{homomorphism} of Lie algebras of group algebras. 
	
	\begin{prop}
		Let $f : G \rightarrow H$ be a group homomorphism.
		Then $\bar{f} : L_{\mathbb{F}G} \rightarrow L_{\mathbb{F}H} $ defined by 
		\begin{center}
			$\bar{f}(\alpha) = \bar{f}(\displaystyle\sum_{i} a_ig_i)=\displaystyle\sum_{i} a_if(g_i)$ 
		\end{center}			
		is a homomorphism between Lie algebras of group algebras.
	\end{prop}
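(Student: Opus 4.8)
The plan is to verify the two defining properties of a homomorphism of Lie algebras of group algebras, namely that $\bar{f}$ is a linear map and that it preserves the Lie bracket. The guiding observation is that $\bar{f}$ is simply the linear extension of the group homomorphism $f$ from the basis $G$ to all of $\mathbb{F}G$; since $G$ is a basis of $\mathbb{F}G$, the map $\bar{f}$ is well defined by its values on basis elements, so well-definedness requires no separate argument.

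First I would confirm linearity. Writing $\alpha = \sum_i a_i g_i$ and $\beta = \sum_i b_i g_i$ in $\mathbb{F}G$ and taking $k \in \mathbb{F}$, I would apply the definition of $\bar{f}$ together with the addition and scalar-multiplication rules in $\mathbb{F}G$ to obtain $\bar{f}(\alpha+\beta) = \sum_i (a_i+b_i) f(g_i) = \bar{f}(\alpha) + \bar{f}(\beta)$ and $\bar{f}(k\alpha) = \sum_i (k a_i) f(g_i) = k\,\bar{f}(\alpha)$. This step is routine.

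The main content is to establish that $\bar{f}$ is multiplicative with respect to the group algebra product, that is, $\bar{f}(\alpha\beta) = \bar{f}(\alpha)\bar{f}(\beta)$. This is precisely where the hypothesis that $f$ is a group homomorphism enters. Expanding $\alpha\beta = \sum_{i,j} a_i b_j (g_i g_j)$ and applying $\bar{f}$, I would invoke $f(g_i g_j) = f(g_i) f(g_j)$ to rewrite the image as $\sum_{i,j} a_i b_j f(g_i) f(g_j) = \bigl(\sum_i a_i f(g_i)\bigr)\bigl(\sum_j b_j f(g_j)\bigr) = \bar{f}(\alpha)\bar{f}(\beta)$. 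I expect this multiplicativity step to be the crux, since it is the one point at which the group-theoretic structure is transferred faithfully to the algebra product; everything else reduces to bilinearity and bookkeeping.

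Finally I would combine linearity and multiplicativity to obtain preservation of the bracket. Using the defining bracket $[\alpha,\beta] = \alpha\beta - \beta\alpha$, linearity gives $\bar{f}([\alpha,\beta]) = \bar{f}(\alpha\beta) - \bar{f}(\beta\alpha)$, and then multiplicativity turns this into $\bar{f}(\alpha)\bar{f}(\beta) - \bar{f}(\beta)\bar{f}(\alpha) = [\bar{f}(\alpha), \bar{f}(\beta)]$. Since $\bar{f}$ is thus a bracket-preserving linear map, it is a homomorphism of Lie algebras of group algebras, as claimed.
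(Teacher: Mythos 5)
Your proposal is correct and follows essentially the same route as the paper: expand the bracket via $[\alpha,\beta]=\alpha\beta-\beta\alpha$, push $\bar{f}$ through the double sum, and use $f(g_ig_j)=f(g_i)f(g_j)$ to regroup into $[\bar{f}(\alpha),\bar{f}(\beta)]$. The only difference is organizational --- you isolate linearity and multiplicativity as separate steps (and in fact check linearity explicitly, which the paper leaves implicit), whereas the paper carries out the whole computation in a single chain of equalities.
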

	\begin{proof}
		Let $G= \{g_1, g_2, g_3 . . . ,  \}$ and $L_{\mathbb{F}G} = 	\{ \displaystyle\sum_{i} a_ig_i: a_i \in \mathbb{F} \ \text{for all i} \}$. Then for  $\alpha = \displaystyle\sum_{i} a_ig_i, \beta = \displaystyle\sum_{j} b_jg_j \in L_{\mathbb{F}G}$,
		
		\begin{equation*}
			\begin{split}
				\bar{f}([\alpha, \beta])&=	\bar{f}(\alpha\beta - \beta\alpha)\\
				&=\bar{f}(\displaystyle\sum_{i} a_ig_i\displaystyle\sum_{j} b_jg_j - \displaystyle\sum_{j} b_jg_j\displaystyle\sum_{i} a_ig_i) \\
				&= \bar{f}(\displaystyle\sum_{i,j} a_ib_jg_ig_j - \displaystyle\sum_{j,i} b_ja_ig_jg_i)\\
				&=\displaystyle\sum_{i,j} a_ib_jf(g_ig_j) - \displaystyle\sum_{j,i} b_ja_if(g_jg_i)\\
				&= \displaystyle\sum_{i,j} a_ib_jf(g_i)f(g_j) - \displaystyle\sum_{j,i} b_ja_if(g_j)f(g_i)\\
				&= \displaystyle\sum_{i} a_if(g_i)\displaystyle\sum_{j} b_jf(g_j) - \displaystyle\sum_{j} b_jf(g_j)\displaystyle\sum_{i} a_if(g_i) \\
				&= [\bar{f}(\alpha), \bar{f}(\beta)]
			\end{split}		
		\end{equation*}	
		Hence, $\bar{f}$ is a homomorphism between Lie algebras of group algebras.
	\end{proof}	
	\ \indent Next we proceed to describe Plesken Lie algebra. Let $G$ be a group and $\mathbb{F}G$ be its group algebra over $\mathbb{F}$, then for each $g \in G$, $g - g^{-1} \in \mathbb{F}G$, denote it by $\hat{g}$, then the linear span of $\hat{g}$  admits a Lie algebra structure as expailned below.
	\begin{defn}\cite{Plesken}
		\ \indent  Plesken Lie algebra $\mathcal{L}(G)$ of a  group $G$ over $\mathbb{F}$   is the linear span of elements $\hat{g} \in \mathbb{F}G$ together with the Lie bracket
		\begin{center}
			$[\hat{g}, \hat{h}] = \hat{g}\hat{h} - \hat{h}\hat{g}$
		\end{center}
	\end{defn}
	That is, for any group $G = \{g_1, g_2, g_3, . . .  \}$,  $ \{\displaystyle\sum_{i} {a_i}\hat{g_i}: a_i \in \mathbb{F} \ \text{for all i} \}$ together with the Lie bracket defined above is the Plesken Lie algebra $\mathcal{L}(G)$.
	\begin{lemma}
		The Plesken Lie algebra $\mathcal{L}(G)$ over $\mathbb{F}$ is a Lie subalgebra of the Lie algebra  $L_{\mathbb{F}G}$.
	\end{lemma}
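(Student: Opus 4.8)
The plan is to verify the two defining properties of a Lie subalgebra. Since $\mathcal{L}(G)$ is by construction the linear span of the elements $\hat{g}=g-g^{-1}$ inside $\mathbb{F}G$, it is automatically a vector subspace of $L_{\mathbb{F}G}$, and its bracket is nothing but the restriction of the commutator bracket $[\alpha,\beta]=\alpha\beta-\beta\alpha$ already carried by $L_{\mathbb{F}G}$. Thus the only thing that genuinely requires proof is that $\mathcal{L}(G)$ is \emph{closed} under this bracket, and because the bracket is bilinear it suffices to check closure on the spanning elements, i.e.\ to show that $[\hat{g},\hat{h}]\in\mathcal{L}(G)$ for all $g,h\in G$.

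First I would expand the product $\hat{g}\hat{h}=(g-g^{-1})(h-h^{-1})$ and likewise $\hat{h}\hat{g}$, each of which yields four group monomials, so that $[\hat{g},\hat{h}]=\hat{g}\hat{h}-\hat{h}\hat{g}$ becomes an explicit sum of eight elements of $\mathbb{F}G$. The key step is then to regroup these eight monomials into four pairs, each of the form $k-k^{-1}=\hat{k}$. Using the inverse relations $(gh)^{-1}=h^{-1}g^{-1}$, $(hg)^{-1}=g^{-1}h^{-1}$, $(gh^{-1})^{-1}=hg^{-1}$ and $(g^{-1}h)^{-1}=h^{-1}g$, this regrouping produces the identity
\begin{center}
	$[\hat{g},\hat{h}]=\widehat{gh}-\widehat{hg}-\widehat{gh^{-1}}-\widehat{g^{-1}h}.$
\end{center}
Each summand on the right-hand side is one of the generating elements $\hat{k}$ with $k\in G$, so the bracket lies in their linear span $\mathcal{L}(G)$, which is exactly what closure demands. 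Extending from the spanning elements to arbitrary $x=\sum_i a_i\hat{g_i}$ and $y=\sum_j b_j\hat{h_j}$ is then immediate from bilinearity.

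I expect the main obstacle here to be organizational rather than conceptual: correctly matching the eight monomials so that each pair closes up into a single $\hat{k}$, which relies on spotting the four inverse relations above. Once those are applied the identity drops out, and the bilinearity argument finishes the proof. No step is genuinely deep; the whole content of the lemma is the observation that the commutator of two differences $g-g^{-1}$ and $h-h^{-1}$ reassembles into a combination of differences of the same type.
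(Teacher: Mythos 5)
Your proposal is correct and follows essentially the same route as the paper: verify that $\mathcal{L}(G)$ is a subspace and then show bracket closure by expanding $[\hat{g},\hat{h}]=(g-g^{-1})(h-h^{-1})-(h-h^{-1})(g-g^{-1})$ and regrouping the eight monomials into hatted elements. Your identity $[\hat{g},\hat{h}]=\widehat{gh}-\widehat{hg}-\widehat{gh^{-1}}-\widehat{g^{-1}h}$ agrees with the paper's $\widehat{gh}-\widehat{gh^{-1}}-\widehat{g^{-1}h}+\widehat{g^{-1}h^{-1}}$ via the relation $\widehat{k^{-1}}=-\hat{k}$, and your explicit appeal to bilinearity to pass from generators to general elements is a small point the paper leaves implicit.
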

	\begin{proof}
		Let $G= \{g_1, g_2, g_3, . . . \}$ be a group. Then the Lie algebra of the group algebra $\mathbb{F}G$ is $L_{\mathbb{F}G} = 	\{ \displaystyle\sum_{i} {\lambda_i}{g_i}: a_i \in \mathbb{F} \ \text{for all i} \}$ and the Plesken Lie algebra is  $\mathcal{L}(G) = \{\displaystyle\sum_{i} {a_i}\hat{g_i}: a_i \in \mathbb{F} \ \text{for all i} \}$.
		Since $\mathcal{L}(G)$ is the linear span of $\hat{g}$ and $ \hat{g} \in L_{\mathbb{F}G}$,  $\mathcal{L}(G)$ is a subset of $L_{\mathbb{F}G}$.\\ Let $\hat{\alpha}=\displaystyle\sum_{i} {a_i}\hat{g_i}  \ \text{and} \  \hat{\beta}=\displaystyle\sum_{i} {b_i}\hat{g_i} $ in $\mathcal{L}(G)$,\\
		\begin{equation*}
			\begin{split}
				\hat{\alpha}+\hat{\beta}&=\displaystyle\sum_{i} {a_i}\hat{g_i}+\displaystyle\sum_{i} {b_i}\hat{g_i}\\
				&=\displaystyle\sum_{i}({a_i}+{b_i})\hat{g_i} \in \mathcal{L}(G)		
			\end{split}
		\end{equation*}
		and $k(\hat{\alpha}) =k \displaystyle\sum_{i} {a_i}\hat{g_i}= \displaystyle\sum_{i} (k{a_i})\hat{g_i} \in \mathcal{L}(G)$. \\
		Thus, $\mathcal{L}(G)$ is a subspace of $L_{\mathbb{F}G}$.\\
		Let $\hat{g}, \hat{h} \in \mathcal{L}(G)$, then
		\begin{equation*}
			\begin{split}
				[\hat{g}, \hat{h}] &= \hat{g}\hat{h} - \hat{h}\hat{g} \\
				&= (g - g^{-1})(h - h^{-1}) - (h - h^{-1})(g - g^{-1}) \\
				&=  \widehat{gh}- \widehat{gh^{-1}}- \widehat{g^{-1}h}+ \widehat{g^{-1}h^{-1}}
			\end{split}
		\end{equation*}
		Thus Lie bracket is closed in $\mathcal{L}(G)$. Hence, $\mathcal{L}(G)$ is a Lie subalgebra of $L_{\mathbb{F}G}$.
	\end{proof}	
	\begin{exam} 
		Consider the symmetric group $S_3$, then  
		\begin{equation*}
			\begin{split}
				L(S_3) &= \text{span} \{ \sigma - {\sigma}^{-1} : \sigma \in S_3 \}\\
				&= \{ a_1((1) - (1)) + a_2((1 \ 2) - (1 \ 2)) +a_3((1 \ 3) - (1 \ 3)) + +a_4((2 \ 3) - (2 \ 3)) \\
				& \ \indent \quad + a_5((1 \ 2 \ 3) - (1 \ 3 \ 2))+ a_6((1 \ 3 \ 2) - (1 \ 2 \ 3)) : a_i \in \mathbb{C} \}\\
				&= \{ a((1 \ 2 \ 3) - (1 \ 3 \ 2)) : a \in \mathbb{C} \}
			\end{split}
		\end{equation*} 
		is a one dimensional  Plesken Lie algebra over $\mathbb{C}$ with Lie bracket
		\begin{center}
			$[a\widehat{(1 \ 2 \ 3)}, b \widehat{(1 \ 2 \ 3)}] = 0$
		\end{center}
	\end{exam}
	A linear map between two Plesken Lie algebras $\mathcal{L}(G)$ and $\mathcal{L}(H)$ is a \textit{Plesken Lie algebra homomorphism} if it preserves the Lie bracket.
	
	\begin{prop}
		Let $f : G \rightarrow H$ be a group homomorphism. Then $\hat{f} : \mathcal{L}(G) \rightarrow \mathcal{L}(H)$ defined by
		\begin{center}
			$\hat{f}(\displaystyle \sum_i a_i \hat{g_i}) = \displaystyle \sum_i a_i \widehat{f(g_i)}$
		\end{center}
		is a Plesken Lie algebra homomorphism.
	\end{prop}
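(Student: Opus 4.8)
The plan is to recognize $\hat{f}$ as the restriction of the map $\bar{f}$ from the preceding Proposition to the Lie subalgebra $\mathcal{L}(G)$, so that well-definedness, linearity and preservation of the bracket are all inherited at once. The key computation is that on a single generator $\hat{g} = g - g^{-1}$ one has
\[
\bar{f}(\hat{g}) = \bar{f}(g - g^{-1}) = f(g) - f(g^{-1}) = f(g) - f(g)^{-1} = \widehat{f(g)},
\]
where I use that $f$ is a group homomorphism, so $f(g^{-1}) = f(g)^{-1}$. Extending linearly gives $\bar{f}\left(\sum_i a_i \hat{g_i}\right) = \sum_i a_i \widehat{f(g_i)} = \hat{f}\left(\sum_i a_i \hat{g_i}\right)$, so $\hat{f} = \bar{f}|_{\mathcal{L}(G)}$.

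I anticipate the main obstacle to be well-definedness, and this is exactly what the restriction viewpoint resolves. The generators $\hat{g}$ are \emph{not} linearly independent in $\mathbb{F}G$: for instance $\widehat{g^{-1}} = g^{-1} - g = -\hat{g}$ and $\hat{e} = e - e = 0$, so the coefficients $a_i$ in an expression $\sum_i a_i \hat{g_i}$ are not determined by the element it represents, and a priori the formula for $\hat{f}$ could depend on the chosen representation. However, since $\{g_i\}$ is a genuine basis of $\mathbb{F}G$, the map $\bar{f}$ is unambiguously defined on all of $L_{\mathbb{F}G}$, and by the Lemma $\mathcal{L}(G)$ is a subspace of $L_{\mathbb{F}G}$. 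Hence the value $\sum_i a_i \widehat{f(g_i)} = \bar{f}\left(\sum_i a_i \hat{g_i}\right)$ depends only on the underlying element of $\mathcal{L}(G)$ and not on its expression in the generators, which settles well-definedness.

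Next I would check that $\hat{f}$ actually lands in $\mathcal{L}(H)$ rather than merely in $L_{\mathbb{F}H}$: each generator maps as $\hat{f}(\hat{g}) = \widehat{f(g)} \in \mathcal{L}(H)$, and $\mathcal{L}(H)$ is a subspace (by the Lemma applied to $H$), so every linear combination stays inside $\mathcal{L}(H)$. Finally, linearity of $\hat{f}$ and preservation of the Lie bracket are immediate from the corresponding properties of $\bar{f}$ established in the preceding Proposition, since restricting a linear, bracket-preserving map to a Lie subalgebra again yields a linear, bracket-preserving map.

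Alternatively, one can argue the bracket identity directly without invoking $\bar{f}$: using the expansion $[\hat{g}, \hat{h}] = \widehat{gh} - \widehat{gh^{-1}} - \widehat{g^{-1}h} + \widehat{g^{-1}h^{-1}}$ from the proof of the Lemma, apply $\hat{f}$ term by term and use $f(xy) = f(x)f(y)$ together with $f(x^{-1}) = f(x)^{-1}$ to obtain $\widehat{f(g)f(h)} - \widehat{f(g)f(h)^{-1}} - \widehat{f(g)^{-1}f(h)} + \widehat{f(g)^{-1}f(h)^{-1}}$, which is precisely $[\widehat{f(g)}, \widehat{f(h)}] = [\hat{f}(\hat{g}), \hat{f}(\hat{h})]$ by the same expansion applied in $\mathcal{L}(H)$.
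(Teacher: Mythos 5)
Your proof is correct, but it takes a genuinely different route from the paper. The paper proves the statement by direct computation: it checks linearity on the formula and then expands the bracket $[\sum_i a_i\hat{g_i},\sum_j b_j\hat{g_j}]$ into the four hatted terms $\widehat{g_ig_j}-\widehat{g_i{g_j}^{-1}}-\widehat{{g_i}^{-1}g_j}+\widehat{{g_i}^{-1}{g_j}^{-1}}$, applies $\hat{f}$ termwise, and uses $f(xy)=f(x)f(y)$ --- essentially your ``alternative'' argument in the last paragraph. Your main argument instead identifies $\hat{f}$ as the restriction $\bar{f}|_{\mathcal{L}(G)}$ via the one-line computation $\bar{f}(g-g^{-1})=f(g)-f(g)^{-1}=\widehat{f(g)}$, so that linearity and bracket preservation are inherited from the preceding Proposition and the Lemma. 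What your approach buys is significant: it disposes of the well-definedness issue, which the paper's proof passes over in silence. Since the spanning set $\{\hat{g}\}$ is not linearly independent (e.g.\ $\widehat{g^{-1}}=-\hat{g}$ and $\hat{e}=0$), a formula given in terms of the coefficients $a_i$ of a chosen expression $\sum_i a_i\hat{g_i}$ is not obviously independent of that expression; realizing $\hat{f}$ as the restriction of the globally defined $\bar{f}$ settles this cleanly. You also correctly verify that the image lands in $\mathcal{L}(H)$ and not merely in $L_{\mathbb{F}H}$. In short, your argument is shorter, closes a gap the paper leaves open, and subsumes the paper's computation as a special case.
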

	\begin{proof}
		For $\displaystyle \sum_i a_i \hat{g_i}, \displaystyle \sum_i b_i \hat{g_i} \in \mathcal{L}(G)$ and $k \in \mathbb{F}$,
		\begin{equation*}
			\begin{split}
				\hat{f}(\displaystyle \sum_i a_i \hat{g_i} + k\displaystyle \sum_i b_i \hat{g_i}) &= \hat{f}(\displaystyle \sum_i (a_i + kb_i) \hat{g_i})\\
				&= \displaystyle \sum_i (a_i + kb_i) \widehat{f(g_i)}\\
				&=\hat{f}(\displaystyle \sum_i a_i \hat{g_i}) + k \hat{f}(\displaystyle \sum_i b_i \hat{g_i})
			\end{split}
		\end{equation*}
		thus $\hat{f}$ is linear.
		
		For $\displaystyle \sum_i a_i \hat{g_i}, \displaystyle \sum_j b_j \hat{g_j}  \in \mathcal{L}(G)$,
		\begin{equation*}
			\begin{split}
				\hat{f}([\displaystyle \sum_i a_i \hat{g_i}, \displaystyle \sum_j b_j \hat{g_j}])&= \hat{f} (\displaystyle \sum_{i,j} (a_ib_j - b_ia_j)\widehat{g_ig_j}- \displaystyle \sum_{i,j}(a_ib_j - b_ia_j)\widehat{g_i{g_j}^{-1}} \\
				& \hspace{0.15in}- \displaystyle \sum_{i,j}(a_ib_j - b_ia_j)\widehat{{g_i}^{-1}g_j} + \displaystyle \sum_{i,j}(a_ib_j - b_ia_j)\widehat{{g_i}^{-1}{g_j}^{-1}})\\
				&= \displaystyle \sum_{i,j} (a_ib_j - b_ia_j)\widehat{f(g_i)f(g_j)}- \displaystyle \sum_{i,j}(a_ib_j - b_ia_j)\widehat{f(g_i)f({g_j}^{-1})} \\
				& \hspace{0.15in}- \displaystyle \sum_{i,j}(a_ib_j - b_ia_j)\widehat{f({g_i}^{-1})f(g_j)} + \displaystyle \sum_{i,j}(a_ib_j - b_ia_j)\widehat{f({g_i}^{-1})f({g_j}^{-1})}\\
				&=[\displaystyle \sum_i a_i \widehat{f(g_i)}, \displaystyle \sum_j a_j \widehat{f(g_j)}]
			\end{split}
		\end{equation*}
	 Hence $\hat{f}$ preserves Lie bracket and is a Plesken Lie algebra homomorphism.
	\end{proof}
	
	\section{CATEGORY OF LIE ALGEBRAS}
Here  we describe the category of Lie algebras over $\mathbb{F}$ whose objects are Lie algebras over $\mathbb{F}$ and morphisms Lie algebra homomorphisms. Let $L$ and $L'$ be two Lie algebras over a field $\mathbb{F}$. If $f$ and $g$ are two morphisms, then $f \circ g$ exists only when $f \in hom(L, L')$ and $g \in hom(L', L'')$. Also the identity in $hom(L,L)$ is the morphism $1_L$.  \\

	\begin{exam}
		Consider a  group $G$ and all its subgroups $H_i$. The group algebras $\mathbb{F}H_i$  of each subgroup $H_i$ of $G$ together with a  Lie bracket $[x, y] = xy - yx$ for $x= \displaystyle \sum_i a_ih_i, y = \displaystyle \sum_j b_jh_j \in \mathbb{F}H_i$ is the Lie algebra $L_{\mathbb{F}H_i}$. The collection of all such Lie algebras of group algebras of subgroups of $G$ form the category $L_{\mathbb{F}G}$ whose morphisms are \{ $\bar{f}_{ij} : L_{\mathbb{F}H_i} \rightarrow L_{\mathbb{F}H_j} \vert \bar{f}(\displaystyle \sum_i a_ig_i) = \displaystyle \sum_i a_if(g_i)$ where $f : H_i \rightarrow H_j$ is the group homomorphism  \}.
	\end{exam}	
	\begin{exam}	
		Consider the  Heisenberg group $H(\mathbb{R}) = \{ \begin{pmatrix}1&a&b\\0&1&c\\0&0&1\end{pmatrix} : a, b, c \in \mathbb{R} \}$ and its non-isomorphic subgroups :
		\begin{equation*}
			\begin{split}
				H_1&=\{I_{3 \times3}, \text{ \ the idenity matrix \ }\},
				H_2= \{ \begin{pmatrix}1&0&b\\0&1&0\\0&0&1\end{pmatrix} : b \in \mathbb{R}\}\\
				H_3&=\{\begin{pmatrix}1&0&b\\0&1&c\\0&0&1\end{pmatrix} : b, c \in \mathbb{R}\}, \			H_4= H(\mathbb{R})
			\end{split}
		\end{equation*}
		\ \indent The group  algebras $\mathbb{F}H_i$ over $\mathbb{F}$ of each subgroups $H_i$ of $H(\mathbb{R})$ together with a Lie bracket $[X, Y] = XY - YX$ where $X = \displaystyle \sum_i \lambda_iA_i, Y = \displaystyle \sum_i \mu_iB_i \in \mathbb{F}H_i$ is the Lie algebra $L_{\mathbb{F}H_i}$ of the group algebra $\mathbb{F}H_i$. Consider  $L_{\mathbb{F}H(\mathbb{R})}$ whose objects are
		\begin{equation*}
			\begin{split}
				L_{\mathbb{F}H_1} &= \{ \lambda I : \lambda \in \mathbb{F} \} = \text{ \  the Lie algebra of scalar matrices \  }\\
				L_{\mathbb{F}H_2}&= \{ \displaystyle \sum_i \lambda_i A_i : A_i \in H_2, \lambda_i \in \mathbb{F} \}= \{ \begin{pmatrix}\displaystyle \sum_i \lambda_i&0&\displaystyle \sum_i \lambda_ib_i\\0&\displaystyle \sum_i \lambda_i&0\\0&0&\displaystyle \sum_i \lambda_i\end{pmatrix} \}\\
				L_{\mathbb{F}H_3}&= \{ \displaystyle \sum_i \lambda_i A_i : A_i \in H_3, \lambda_i \in \mathbb{F} \}
				= \{ \begin{pmatrix}\displaystyle \sum_i \lambda_i&0&\displaystyle \sum_i \lambda_ib_i\\0&\displaystyle \sum_i \lambda_i&\displaystyle \sum_i \lambda_ic_i\\0&0&\displaystyle \sum_i \lambda_i\end{pmatrix} \}\\
			\end{split}
		\end{equation*}
		\begin{equation*}
			\begin{split}
				L_{\mathbb{F}H_4}&= \{ \displaystyle \sum_i \lambda_i A_i : A_i \in H_4, \lambda_i \in \mathbb{F} \}
				= \{ \begin{pmatrix}\displaystyle \sum_i \lambda_i&\displaystyle \sum_i \lambda_ia_i&\displaystyle \sum_i \lambda_ib_i\\0&\displaystyle \sum_i \lambda_i&\displaystyle \sum_i \lambda_ic_i\\0&0&\displaystyle \sum_i \lambda_i\end{pmatrix} \}\\
			\end{split}
		\end{equation*}
		and morphisms : hom($L_{\mathbb{F}H_i}, L_{\mathbb{F}H_j}) = \{ \bar{f}_{ij} : L_{\mathbb{F}H_i} \rightarrow L_{\mathbb{F}H_j} \vert \bar{f}(\displaystyle \sum_i a_ig_i) = \displaystyle \sum_i a_if(g_i)$ where $f : H_i \rightarrow H_j$ is the group homomorphism  \} is the category $L_{\mathbb{F}(H(\mathbb{R}))}$  of Lie algebras of the group algebras of subgroups of $H(\mathbb{R})$.
	\end{exam}	
	\par Next we consider the category  whose objects are Plesken Lie algebras $\mathcal{L}(G)$ over a field $\mathbb{F}$ and morphisms are Plesken Lie algebra homomorphisms which we denote it by $ \mathcal{C}_{PLG}$.	
	\begin{exam}
		Consider the  Heisenberg group $H(\mathbb{R}) = \{ \begin{pmatrix}1&a&b\\0&1&c\\0&0&1\end{pmatrix} : a, b, c \in \mathbb{R} \}$ and its non-isomorphic subgroups $H_1, H_2, H_3$ and $H_4$ as given in Example.6. For each $A = \begin{pmatrix}1&a&b\\0&1&c\\0&0&1 \end{pmatrix} \in H_i$,
		\begin{equation*}
			\begin{split}
				\hat{A} &= A - A^{-1} = \begin{pmatrix}1&a&b\\0&1&c\\0&0&1 \end{pmatrix} - \begin{pmatrix}1&-a&ac-b\\0&1&-c\\0&0&1 \end{pmatrix} 
				=\begin{pmatrix}0&2a&2b-ac\\0&0&2c\\0&0&0 \end{pmatrix}
			\end{split}
		\end{equation*}
		Thus $\mathcal{L}(H_i) = \{ \displaystyle \sum_i \lambda_i \hat{A_i} : A_i \in H_i, \lambda_i \in \mathbb{F} \}$ and so
		\begin{equation*}
			\begin{split}
				\mathcal{L}(H_1) &= \{0\}, \ \indent \mathcal{L}(H_2)= \{ \begin{pmatrix}0&0&2\displaystyle \sum_i \lambda_ib_i\\0&0&0\\0&0&0 \end{pmatrix}\}\\
				\mathcal{L}(H_3)&= \{ \begin{pmatrix}0&0&2\displaystyle \sum_i \lambda_ib_i\\0&0&2\displaystyle \sum_i \lambda_ic_i\\0&0&0 \end{pmatrix} \}, 
				\mathcal{L}(H_4)= \{ \begin{pmatrix}0&2\displaystyle \sum_i \lambda_ia_i&\displaystyle \sum_i \lambda_i(2b_i - a_ic_i)\\0&0&2\displaystyle \sum_i \lambda_ic_i\\0&0&0 \end{pmatrix} \}\\						
			\end{split}
		\end{equation*}
		
		The category whose objects are $\mathcal{L}(H_i),i=1,2,3,4$ and morphisms are 
		$hom(\mathcal{L}(H_i), \mathcal{L}(H_j)) = \{ \hat{f} : \mathcal{L}(H_i) \rightarrow \mathcal{L}(H_j) \ \vert \ \hat{f}(\displaystyle \sum_i \lambda_i\hat{A_i}) = \displaystyle \sum_i \lambda_i\widehat{f(A_i)}$ where $f : H_i \rightarrow H_j$ is the group homomorphism \} is the category of Plesken Lie algebras of subgroups of $H(\mathbb{R})$.	
	\end{exam}	
	\begin{thm}
		If $L_{\mathbb{F}G}$ is the category of Lie algebras of group algebras and $\mathcal{C}_{PLG}$ is the category of Plesken Lie algebras, then 
		there exists a functor from $L_{\mathbb{F}G}$ to $\mathcal{C}_{PLG}$.
		
	\end{thm}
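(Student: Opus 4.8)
The plan is to build the functor $T : L_{\mathbb{F}G} \to \mathcal{C}_{PLG}$ directly from the data already assembled, by sending each Lie algebra of a group algebra to the Plesken Lie algebra of the underlying group and each induced Lie algebra homomorphism to the Plesken Lie algebra homomorphism induced by the same group map. Concretely, on objects I would set $T(L_{\mathbb{F}H_i}) = \mathcal{L}(H_i)$; this is legitimate since $\mathcal{L}(H_i)$ is a Plesken Lie algebra, hence an object of $\mathcal{C}_{PLG}$, and by the Lemma above it sits inside $L_{\mathbb{F}H_i}$ as a Lie subalgebra. On arrows, every morphism of $L_{\mathbb{F}G}$ is of the form $\bar{f} : L_{\mathbb{F}H_i} \to L_{\mathbb{F}H_j}$ for a group homomorphism $f : H_i \to H_j$, so I would define $T(\bar{f}) = \hat{f}$, the Plesken Lie algebra homomorphism induced by the same $f$, which is a genuine morphism of $\mathcal{C}_{PLG}$ by the preceding proposition on $\hat{f}$.

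Before checking the axioms I would settle well-definedness of the arrow function. Evaluating the defining formula on a single basis element gives $\bar{f}(g_i) = f(g_i)$, so $\bar{f}$ recovers $f$ on all of $H_i$; thus the underlying group homomorphism, and therefore $\hat{f}$, is uniquely determined by $\bar{f}$, and the assignment $\bar{f} \mapsto \hat{f}$ is unambiguous.

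For the identity axiom, the identity morphism $1_{L_{\mathbb{F}H_i}}$ is exactly $\overline{\mathrm{id}_{H_i}}$, the map induced by the identity homomorphism on $H_i$, so applying $T$ gives $\widehat{\mathrm{id}_{H_i}} = 1_{\mathcal{L}(H_i)}$, as required. The real content lies in the composition axiom. Given group homomorphisms $f : H_i \to H_j$ and $g : H_j \to H_k$, I would first verify the two induced-map constructions are each functorial in $f$ separately, namely $\bar{g} \circ \bar{f} = \overline{g \circ f}$ and $\hat{g} \circ \hat{f} = \widehat{g \circ f}$, by a short evaluation on a general element $\sum_i a_i g_i$ (respectively $\sum_i a_i \hat{g_i}$) that pushes the scalars through and uses $g(f(g_i)) = (g \circ f)(g_i)$. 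Combining the two identities then yields $T(\bar{g} \circ \bar{f}) = T(\overline{g \circ f}) = \widehat{g \circ f} = \hat{g} \circ \hat{f} = T(\bar{g}) \circ T(\bar{f})$, which is precisely the required compatibility.

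The step I expect to be the main obstacle is not any single calculation but making the composition argument airtight: I must confirm that composition in each of the two categories is literally the composition induced by the underlying group homomorphisms, so that the indexing of morphisms by group maps is compatible on both sides and the correspondence $\bar{f} \leftrightarrow f \leftrightarrow \hat{f}$ is preserved under composing. Once this bookkeeping is secured, the two one-line functoriality computations close the argument and exhibit $T$ as a functor.
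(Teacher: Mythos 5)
Your proposal is correct and follows essentially the same route as the paper: send each object $L_{\mathbb{F}G_i}$ to $\mathcal{L}(G_i)$, send each induced morphism $\bar{f}$ to $\hat{f}$, and verify the identity and composition axioms by evaluating on general elements. Your explicit well-definedness check (recovering $f$ from $\bar{f}$ on basis elements) is a small addition the paper omits, but it does not change the argument.
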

	\begin{proof}
		
		Define  $T : L_{\mathbb{F}G} \rightarrow \mathcal{C}_{PLG}$ such that 
		 $\nu T : L_{\mathbb{F}G_i} \rightarrow \mathcal{L}(G_i)$ is defined by 
		\begin{center}
			$\nu T(\displaystyle \sum_i a_ig_i) = \displaystyle \sum_{i}(a_i - a_i')(g_i - {g_i}^{-1})$
		\end{center}

		and on morphisms, for  a group homomorphism $f : G_i \rightarrow G_j$, $\bar{f} : L_{\mathbb{F}G_i} \rightarrow  L_{\mathbb{F}G_j}$ is given by
		\begin{center}
			$\bar{f}(\displaystyle \sum_i a_ig_i) = \displaystyle \sum_i a_if(g_i)$
		\end{center}
		 and $T\bar{f} : T(L_{\mathbb{F}G_i})  \rightarrow T(L_{\mathbb{F}G_j})$ is  defined by
		\begin{center}
			$T\bar{f} = \hat{f}$
		\end{center}
		and $\hat{f} : \mathcal{L}(G_i) \rightarrow \mathcal{L}(G_j)$ is given by 
		\begin{center}
			$\hat{f}(\displaystyle \sum_{i}(a_i- a_i')(g_i-{g_i}^{-1}))= \displaystyle \sum_{i}(a_i- a_i')(f(g_i)-f({g_i})^{-1})$
		\end{center}
		where $\hat{f}$ is a Plesken Lie algebra homomorphism ( Proposition 2).\\
		Let $\bar{f_1} : L_{\mathbb{F}G_i} \rightarrow L_{\mathbb{F}G_j}$ and $ \bar{f_2} : L_{\mathbb{F}G_j} \rightarrow L_{\mathbb{F}G_k}$ be two  homomorphisms between Lie algebras of group algebras which are induced from the group homomorphisms $f_1 : G_i \rightarrow G_j$ and $f_2 : G_j \rightarrow G_k$. Then $T(\bar{f_1}): T(L_{\mathbb{F}G_i}) \rightarrow T(L_{\mathbb{F}G_j})$ and $T(\bar{f_2}): T(L_{\mathbb{F}G_j}) \rightarrow T(L_{\mathbb{F}G_k})$ are Plesken Lie algebra homomorphisms. Then their composition $T(\bar{f_2}) \circ T(\bar{f_1}) : T(L_{\mathbb{F}G_i}) \rightarrow T(L_{\mathbb{F}G_k})$ is also a  Plesken Lie algebra homomorphism. Moreover,
	\begin{tiny}
		\begin{equation}
			\begin{split}
				(T(\bar{f_2}) \circ T(\bar{f_1}))(\displaystyle \sum_{i}(a_i- a_i')(g_i-{g_i}^{-1}))&=(\hat{f_2} \circ \hat{f_1})(\displaystyle \sum_{i}(a_i- a_i')(g_i-{g_i}^{-1})))\\
				&=\hat{f_2}(\displaystyle \sum_{i=1}(a_i- a_i')(f_1(g_i)-f_1({g_i}^{-1})))\\
				&= \displaystyle \sum_{i}(a_i- a_i')((f_2 \circ  f_1)(g_i)-((f_2 \circ f_1)(g_i))^{-1})
			\end{split}
		\end{equation}
	\end{tiny}
and
	\begin{tiny}
		\begin{equation}
			\begin{split}
				T(\bar{f_2} \circ \bar{f_1})(\displaystyle \sum_{i}(a_i- a_i')(g_i-{g_i}^{-1}))&= (\widehat{f_2 \circ f_1})(\displaystyle \sum_{i}(a_i- a_i')(g_i-{g_i}^{-1}))\\
				&=\displaystyle \sum_{i}(a_i- a_i')((f_2 \circ  f_1)(g_i)-((f_2 \circ f_1)(g_i))^{-1})
			\end{split}
		\end{equation} 
	\end{tiny}
		$(1)$ and $(2)$ shows that $	T(\bar{f_2} \circ \bar{f_1})= T(\bar{f_2}) \circ T(\bar{f_1})$.\\
		The identity Lie algebra homomorphism, $1_{L_{\mathbb{F}G_i}} : L_{\mathbb{F}G_i} \rightarrow L_{\mathbb{F}G_i}$ is  induced from the identity group homomorphism $1_{G_i}: G_i \rightarrow G_i$. Then $T(1_{L_{\mathbb{F}G_i}}) : T(L_{\mathbb{F}G_i}) \rightarrow T(L_{\mathbb{F}G_i})$ is a Plesken Lie algebra homomorphism and
		\begin{equation*}
			\begin{split}
				T(1_{L_{\mathbb{F}G_i}})(\displaystyle \sum_{i}(a_i- a_i')(g_i-{g_i}^{-1}))&= \displaystyle \sum_{i}(a_i- a_i')(1_{G_i}(g_i)-(1_{G_i}({g_i}))^{-1})\\
				&=\displaystyle \sum_{i}(a_i- a_i')(g_i-{g_i}^{-1})\\
				&= 1_{T(L_{\mathbb{F}G_i})}(\displaystyle \sum_{i}(a_i- a_i')(g_i-{g_i}^{-1}))
			\end{split}
		\end{equation*}
		That is, $T(1_{L_{\mathbb{F}G_i}}) = 1_{T(L_{\mathbb{F}G_i})}$, hence $T$ is a functor.
		
	\end{proof}
	\begin{cor}
		The functor $T : L_{\mathbb{F}G} \rightarrow \mathcal{C}_{PLG}$ in Theorem 1 is a full functor.
	\end{cor}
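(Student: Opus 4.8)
The plan is to verify directly the defining property of a full functor: for every ordered pair of objects $L_{\mathbb{F}G_i}$ and $L_{\mathbb{F}G_j}$ of $L_{\mathbb{F}G}$, the induced map on hom-sets
$$T : \mathrm{hom}(L_{\mathbb{F}G_i}, L_{\mathbb{F}G_j}) \longrightarrow \mathrm{hom}(\mathcal{L}(G_i), \mathcal{L}(G_j))$$
is surjective, where I have used that $T$ sends the object $L_{\mathbb{F}G_i}$ to $\mathcal{L}(G_i)$, as specified by the object function $\nu T$ in Theorem 1.

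The key observation is that, by the way both categories are set up, each morphism is completely determined by a group homomorphism between the underlying groups: a morphism in $\mathrm{hom}(L_{\mathbb{F}G_i}, L_{\mathbb{F}G_j})$ is of the form $\bar{f}$ for some group homomorphism $f : G_i \to G_j$, while a morphism in $\mathrm{hom}(\mathcal{L}(G_i), \mathcal{L}(G_j))$ is of the form $\hat{g}$ for some group homomorphism $g : G_i \to G_j$. Accordingly, I would begin by fixing an arbitrary element $\hat{f}$ of the codomain hom-set and recording the group homomorphism $f : G_i \to G_j$ that induces it.

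Next I would take the same $f$ and form the Lie algebra homomorphism $\bar{f} : L_{\mathbb{F}G_i} \to L_{\mathbb{F}G_j}$ guaranteed by Proposition 1, which is a morphism of $L_{\mathbb{F}G}$. Applying the action of $T$ on morphisms as defined in Theorem 1 gives $T\bar{f} = \hat{f}$ exactly, so $\bar{f}$ is a preimage of $\hat{f}$. Since $\hat{f}$ was arbitrary, $T$ is surjective on each hom-set and hence full.

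The argument carries essentially no computational content. The only point requiring care is the confirmation that $T\bar{f}$ and $\hat{f}$ agree as maps on all of $\mathcal{L}(G_i)$, and this is precisely the equality $T\bar{f} = \hat{f}$ built into the definition of $T$ and already invoked in the proof of Theorem 1. The conceptual heart of the matter, and the only thing one must genuinely check, is that $T$ leaves the inducing group homomorphism unchanged, merely reinterpreting $\bar{f}$ as $\hat{f}$; once this is observed, fullness is immediate.
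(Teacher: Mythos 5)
Your proposal is correct and follows essentially the same route as the paper: given $\hat{f}$ in the target hom-set, recover the inducing group homomorphism $f$, form $\bar{f}$ via Proposition 1, and observe $T\bar{f}=\hat{f}$ by the definition of $T$ on morphisms. You make explicit the same assumption the paper uses implicitly, namely that every morphism between Plesken Lie algebras in $\mathcal{C}_{PLG}$ is of the form $\hat{f}$ for some group homomorphism $f$.
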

	\begin{proof}
		For the functor $T : L_{\mathbb{F}G} \rightarrow \mathcal{C}_{PLG}$ define a map $T_{L_{\mathbb{F}G_i}, L_{\mathbb{F}G_j}} : hom(L_{\mathbb{F}G_i}, L_{\mathbb{F}G_j}) \rightarrow hom(TL_{\mathbb{F}G_i}, TL_{\mathbb{F}G_j})$ by 
		\begin{center}
			 $T_{L_{\mathbb{F}G_i}, L_{\mathbb{F}G_j}}(\bar{f})= \hat{f}$
		\end{center}
		Now it is enough to prove that $T_{L_{\mathbb{F}G_i}, L_{\mathbb{F}G_j}}$
		is surjective. For  $\hat{f} \in hom(TL_{\mathbb{F}G_i}, TL_{\mathbb{F}G_j})$, a Plesken Lie algebra homomorphism, 
		define $\bar{f} : L_{\mathbb{F}G_i} \rightarrow L_{\mathbb{F}G_j}$ by
		\begin{center}
			$\bar{f}(\displaystyle \sum_i a_i g_i) = \displaystyle \sum_i a_i f(g_i)$
		\end{center}
		 is a homomorphism between Lie algebras of group algebras and
		$T\bar{f} : TL_{\mathbb{F}G_i} \rightarrow TL_{\mathbb{F}G_j}$ given by,
		\begin{center}
			$T\bar{f}(\displaystyle \sum_i a_i \hat{g_i}) = \displaystyle \sum_i a_i \widehat{f(g_i)} = \hat{f}(\displaystyle \sum_i a_i \hat{g_i})$
		\end{center}
		That is, $T\bar{f} = \hat{f}$. Hence $T$ is full.
	\end{proof}

	 However, it should be noted that the functor defined above need not be faithful as illustrated in the following example.
	
	\begin{exam}
		 Let  $K_4 = \{e, a, b, c\}$ be the Klein 4- group and $L_{\mathbb{F}K_4}$ Lie algebra  of the group algebra $\mathbb{F}K_4$. Consider the identity map $1_{L_{\mathbb{F}K_4}}$  and $\bar{f} : L_{\mathbb{F}K_4} \rightarrow L_{\mathbb{F}K_4}$ given by
		 \begin{center}
		 	$\bar{f}(\displaystyle \sum_i a_ig_i) = \displaystyle \sum_i a_if(g_i)$
		 \end{center}
	 where $g_i \in K_4$ and $f$ is the trivial homomorphism. Clearly both
		  $ T\bar{f}$ and $T1_{L_{\mathbb{F}K_4}}$ are zeroes for the functor $T : L_{\mathbb{F}G} \rightarrow \mathcal{C}_{PLG}$.
		
	\end{exam}

\end{document}